\documentclass[12pt, reqno, a4paper]{amsart}

\usepackage{ amssymb, amsmath, amsfonts, amsthm, mathrsfs, url, bm, mathtools, enumitem}

\usepackage{xcolor}  	
\usepackage{hyperref}

\hypersetup{
colorlinks,
   linkcolor={cyan!80!black},
   citecolor={cyan!80!black},
 urlcolor={cyan!80!black}
}

\usepackage{color}

\usepackage[margin=1in]{geometry}

\RequirePackage{doi}

\usepackage{amscd}
\usepackage{amsfonts}
\usepackage{float}
\usepackage{color}
\usepackage[
backend=biber,
style=alphabetic,
]{biblatex}
\usepackage{bookmark}

\renewbibmacro{in:}{}
\DeclareFieldFormat{title}{#1}

\DeclareFieldFormat[article]{title}{\mkbibemph{#1}}       
\DeclareFieldFormat[incollection]{title}{\mkbibemph{#1}}  
\DeclareFieldFormat[book]{title}{\mkbibemph{#1}}          
\DeclareFieldFormat[incollection]{booktitle}{#1}          
\DeclareFieldFormat[article]{journaltitle}{#1}            

\AtEveryBibitem{%
  \ifentrytype{misc}{\DeclareFieldFormat{title}{\mkbibemph{#1}}}{}}

\DeclareFieldFormat{eprint:eprint}{arXiv:\href{https://arxiv.org/abs/#1}{#1}}

\DeclareFieldFormat[inproceedings]{title}{\mkbibemph{#1}}

\DeclareFieldFormat[inproceedings]{booktitle}{#1}

\usepackage{amssymb}

\newtheorem{theorem}{Theorem}[section]
\newtheorem{lemma}{Lemma}[section]

\newtheorem{proposition}{Proposition}[section]

\theoremstyle{definition}
\newtheorem{definition}{Definition}[section]

\theoremstyle{remark}

\numberwithin{equation}{section}

\newcommand{\Mod}[1]{\ (\mathrm{mod}\ #1)}

\renewcommand{\Re}{\mathrm{Re}}

\newcommand{\N}{{\mathbb N}}
\newcommand{\E}{{\mathbb E}}

\newcommand{\T}{\mathbb T}

\newcommand{\D}{\mathbb D}

\renewcommand{\leq}{\leqslant}
\renewcommand{\geq}{\geqslant}

\begin{document}

\title[Multiplicative functions along linear fractional sequences]{Value distribution of multiplicative functions along linear fractional sequences}


\author{Sun-Kai Leung}
\address{Mathematical Institute, University of Oxford, Andrew Wiles Building\\ Radcliffe Observatory Quarter, Woodstock Rd\\
Oxford OX2 6GG\\
United Kingdom}
\email{sunkaileung@gmail.com}


\subjclass[2020]{11J71, 11N64}

\date{}

\dedicatory{}

\keywords{}

\begin{abstract}
For $a,c\in\mathbb{N}$ and $b,d\in\mathbb{Z}$ such that the (non-empty) set
\[
R_{a,b,c,d}
:=\left\{\frac{an+b}{\,cn+d\,}: n\in\mathbb{N}\right\}
\cap\bigl(\mathbb{Q}_{>0}\setminus\{1\}\bigr)
\]
is multiplicatively recurrent, we give a complete characterization of
the set of limit points of every unimodular multiplicative function
$f\in\mathcal{M}$ along $R_{a,b,c,d}.$ We show that the possible limit sets are either the finite subgroups of the unit circle or the entire circle, thereby extending the dichotomy of Klurman--Mangerel.
\end{abstract}

\maketitle

\section{Introduction}
Denote the set of unimodular completely multiplicative functions by
\begin{align*}
\mathcal{M}:=\{ f: \mathbb{Q}_{>0} \to \mathbb{T} \,:\, f(rs)=f(r)f(s) \text{ for $r,s \in \mathbb{Q}_{>0}$} \},
\end{align*}
which can be identified with the Pontryagin dual of the group of positive rationals $(\mathbb{Q}_{>0},\times).$
By the pigeonhole principle, for every $f \in \mathcal{M}$ and $\epsilon>0$,
there exist infinitely many rational numbers $r \in \mathbb{Q}_{>0}$ such that
\[
\left| f( r ) - 1 \right| < \epsilon,
\]
which can be viewed as the multiplicative analogue of Dirichlet's
approximation theorem. It is then natural to restrict this 
approximation problem to a thin subset of \(\mathbb{Q}_{>0}.\)

In this direction, Klurman and Mangerel \cite{MR3856825} showed that for every $f \in \mathcal{M},$ we have
\begin{align} \label{eq:n+1n}
\liminf_{n \to \infty} \left| f\left( \frac{n+1}{n} \right) -1 \right|=0.
\end{align}
Let $a,c\in\mathbb{N}$ and $b,d\in\mathbb{Z}.$
As a generalization, Charamaras, Mountakis, and Tsinas \cite{charamaras2024multiplicativerecurrencelinearpatterns} showed that 
\begin{align} \label{eq:cmt}
\liminf_{n \to \infty} \left| f\left( \frac{an+b}{cn+d} \right) -1 \right|=0
\end{align}
for every $f \in \mathcal{M}$ if and only if $a=c$ and either $b=d$ or  $a \mid \operatorname{lcm}(b,d).$\footnote{Their result is stated under the normalization
\((a,b,c,d)=1\); the formulation above is equivalent after dividing
\(a,b,c,d\) by their greatest common divisor.} See \cite[Corollary 1.7]{MR4594405} for the case where $a=c$ and either $a \mid b$ or $a \mid d.$ 

For \( a, c \in \mathbb{N} \) and \( b, d \in \mathbb{Z} \), define
\[
R_{a,b,c,d} := \left\{ \frac{an + b}{cn + d} \,:\, n \in \mathbb{N} \right\} \cap (\mathbb{Q}_{>0} \setminus \{1\}).\]
Recently, Táfula and the author~\cite{leung2025multiplicativerecurrencemobiustransformations}
proved that the set $R_{a,b,c,d}$, if non-empty, is multiplicatively recurrent
(see, for instance, \cite{leung2025multiplicativerecurrencemobiustransformations} for the definition)
if and only if $a=c$ and $a \mid \operatorname{lcm}(b,d)$.
In particular, for every $f_1, \ldots, f_r \in \mathcal{M},$ we have
\begin{align*}
\liminf_{n \to \infty} \max_{1 \leq j \leq r} \left|f_j \left(\frac{an+b}{an+d} \right)-1\right| = 0,
\end{align*}
thereby extending the aforementioned result of Charamaras, Mountakis, and Tsinas to simultaneous approximation.

For a sequence of complex numbers $(z_n)$, we denote its set of limit points by
\begin{align*}
 \omega(z_n):=\bigcap_{N=1}^{\infty} \overline{\{ z_n \,:\, n \geq N \}}.
\end{align*}
Also, for $k \in \mathbb{N},$ we denote the group of $k$-th roots of unity by
\begin{align*}
\mu_k:=\{ z \in \mathbb{T} \,:\, z^k=1 \}.
\end{align*}
More precisely, Klurman and Mangerel \cite{MR3856825} showed that
\begin{align*}
  \omega \left(  f \left( \frac{n+1}{n} \right)  \right)=
\begin{cases}
\mu_k  & \mbox{{\normalfont if $f \cdot n^{-it} $ has order $k \in \mathbb{N}$ for some $t \in \mathbb{R},$}} \\
\mathbb{T} & \mbox{{\normalfont otherwise, }}
\end{cases}
\end{align*}
where the \textit{order} of $g \in \mathcal{M}$ is the minimum $k \in \mathbb{N}$ for which $g^k \equiv 1.$
In particular, we have
\[
1\in
\omega\left(
f\left(\frac{n+1}{n}\right)
\right)
\]
for every \(f\in\mathcal{M}\), which is equivalent to
\eqref{eq:n+1n}.

In this paper, we extend their dichotomy to those integers $a\in\mathbb{N}$ and $b,d\in\mathbb{Z}$ satisfying $a\mid\operatorname{lcm}(b,d)$, thereby
strengthening \eqref{eq:cmt} as well.\footnote{Without loss of generality, we assume $b , d \geq 0$. If $b=d,$ then the set of limit points is $\{1\}.$}

\begin{theorem} \label{thm:main}
Let \(a\geq 1\) and \(b,d\geq 0\) be integers for which
\(b\neq d\) and \(a\mid\operatorname{lcm}(b,d)\). Then for every $f \in \mathcal{M},$ we have
\begin{align*}
\omega \left(  f \left( \frac{an+b}{an+d} \right)  \right)=
\begin{cases}
\mu_k  & \mbox{{\normalfont if $f \cdot n^{-it} $ has order $k \in \mathbb{N}$ for some $t \in \mathbb{R},$}} \footnotemark \\
\mathbb{T} & \mbox{{\normalfont otherwise. }}
\end{cases}
\end{align*}
\end{theorem}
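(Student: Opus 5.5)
We split according to whether some twist $g:=f\cdot n^{-it}$ has finite order $k$, or no such twist exists, and treat the inclusions $\subseteq$ and $\supseteq$ separately. Throughout we write $r_n:=\frac{an+b}{an+d}$, so that $\log r_n=\frac{b-d}{an}+O(n^{-2})\to 0$. Hence $r_n^{it}\to 1$, and
\[
f(r_n)=g(r_n)\,r_n^{it}=g(r_n)+o(1).
\]
The limit set of $f(r_n)$ therefore equals that of $g(r_n)$ for every $t\in\mathbb{R}$.

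\textbf{The case where $g$ has order $k$.} Here $g(r_n)\in\mu_k$ for all $n$, which gives $\omega\subseteq\mu_k$. For the reverse inclusion we use that $\omega$ is closed under multiplication. Indeed, $1\in\omega$ in a strong, simultaneous form: the multiplicative recurrence of $R_{a,b,a,d}$ proved by Táfula and the author says that for any finite set of test functions, $r_n$ is simultaneously close to $1$ along an infinite set of $n$. To show $\zeta\in\omega$ for a given $\zeta\in\mu_k$, we need $n$ with $g(an+b)=\zeta\, g(an+d)$. We would obtain this by a Klurman--Mangerel style argument: assume some $\zeta\in\mu_k$ is avoided for all large $n$, and derive that $g$ must pretend to be a Dirichlet character, or a twist of one, in a way that contradicts the minimality of the order $k$. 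The cleanest route is the Fourier expansion of the indicator of $\{g(an+b)\overline{g(an+d)}=\zeta\}$ over $\mu_k$:
\[
\frac1k\sum_{j=0}^{k-1}\bar\zeta^{\,j}\,g^j(an+b)\,\overline{g^j(an+d)}.
\]
We then need the logarithmic two-point correlations $\E^{\log}_n\, g^j(an+b)\overline{g^j(an+d)}$ not to cancel the $j=0$ term for every $\zeta$. By Tao's logarithmic two-point Elliott theorem, these correlations vanish unless $g^j$ pretends to be $\chi n^{it'}$. In the pretentious case, the correlation is computed explicitly by local densities at the primes dividing the conductor, and the hypothesis $a\mid\operatorname{lcm}(b,d)$ is exactly what makes these local factors align so the sum is positive. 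Showing this positivity for every $\zeta\in\mu_k$ is the main obstacle. Failing a direct argument, we would instead reduce to the case $k$ prime-power and use the fact that $\omega$ is a closed subsemigroup of $\mu_k$, hence a subgroup $\mu_m$; then show $m=k$ by exhibiting a single $n$ with $g(r_n)$ a primitive $k$-th root. For that last step we pick a prime $p$ with $g(p)$ primitive, choose $n$ by CRT with $an+b\equiv0 \pmod p$ exactly, and use recurrence on the cofactors.

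\textbf{The case where no twist has finite order.} Closedness of $\omega$ under products again shows $\omega$ is a closed subsemigroup of $\mathbb{T}$, hence a closed subgroup: either $\mathbb{T}$ or some $\mu_m$. Suppose $\omega=\mu_m$. Then $f^m(r_n)\to 1$ along all $n$. By the Klurman--Mangerel-type rigidity (a multiplicative function $F=f^m$ with $F(an+b)\overline{F(an+d)}\to1$), which we would prove by the same Elliott/pretentious analysis, one gets $F(p)=p^{it}$ for all primes $p$. Hence $f^m\cdot n^{-imt}\equiv1$, so $f\cdot n^{-it}$ has finite order, a contradiction. This rigidity step for general $(a,b,d)$ is the second serious point. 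We expect it to follow by iterating $n\mapsto$ suitable arithmetic progressions, using $a\mid\operatorname{lcm}(b,d)$ to chain values, as in the $(n+1)/n$ case.
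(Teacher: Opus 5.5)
Your proposal has a genuine gap. The two steps you flag as obstacles carry essentially all of the content, and the structural shortcut you use to get around them is not justified.

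You assert that $\omega$ is closed under multiplication because $1\in\omega$ ``in a strong, simultaneous form''. What multiplicative recurrence of $R_{a,b,a,d}$ gives is large $n$ for which finitely many prescribed values $f_j(r_n)$ are simultaneously near $1$. It gives no way to pass from $f(r_{n_1})\approx z_1$ and $f(r_{n_2})\approx z_2$ to an $n$ with $f(r_n)\approx z_1z_2$, since $r_{n_1}r_{n_2}$ is not of the form $r_n$. So the ``closed subsemigroup, hence $\mu_m$ or $\mathbb{T}$'' dichotomy is unavailable in both of your cases. The CRT fallback is circular. If $p\,\|\,an+b$, then $g(r_n)=g(p)\,g\bigl(\frac{(an+b)/p}{an+d}\bigr)$, but this cofactor ratio tends to $1/p$, not $1$, so recurrence says nothing about it; its $g$-value is just $\overline{g(p)}\,g(r_n)$. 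Finally, two claims are only asserted: the positivity of $\frac1k\sum_j\bar\zeta^{\,j}\,\mathbb{E}^{\log}_n g^j(an+b)\overline{g^j(an+d)}$ for every $\zeta$, and the rigidity statement $F(r_n)\to1\Rightarrow F=n^{it}$. For pretentious $g^j$ the correlations on a fixed progression are genuinely nonzero, and ``local factors align'' is not an argument.

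The missing idea is not to evaluate these pretentious main terms at all, but to average them away.

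\emph{Reduction.} Using $a\mid\operatorname{lcm}(b,d)$ and Lemmas \ref{lem:normalization} and \ref{lem:reduction}, the paper reduces to $\frac{Am+B}{Am+B-1}$ with $A=B(B-1)$.

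\emph{Sub-progressions.} It then restricts to the progressions $m=L_Qn+\ell_Q$, $Q\in\Phi_K$, of Lemma \ref{lem:bezout-progression}. On these, $Am+B=BQ^+(R^+n+r^+)$ and $Am+B-1=(B-1)Q^-(R^-n+r^-)$. All primes $\leq K$ and all relevant conductors divide $R^\pm$, and $r^+\equiv r^-\Mod{Q}$.

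\emph{Pretentious powers.} For $f^h\sim\widetilde\chi_h\, n^{it_h}$, the concentration inequality turns $\mathbb{E}_{Q\in\Phi_K}\mathbb{E}^{\log}_{n\leq N}z_{Q,n}^h$ into $g_h(B(B-1))\,\mathbb{E}_{Q\in\Phi_K}g_h(Q)$ times the $Q$-independent factor $\exp(-2\Re F_h)$. The errors vanish as $K\to\infty$. Here $g_h\not\equiv1$ precisely because $f^h\not\equiv n^{it}$, so multiplicative orthogonality on the F\o{}lner sets kills this term.

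\emph{Non-pretentious powers.} These are handled uniformly in $Q$ by Tao's theorem along a common sequence $N_j$ (Lemma \ref{lem:simultaneous-nonpretentiousness}).

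\emph{Conclusion.} The averaged measure $\sigma_{K,N}$ therefore has vanishing Fourier coefficients. Erd\H{o}s--Tur\'an, or Fourier inversion on $\mu_k$ in the finite-order case, then gives $\omega=\mathbb{T}$ or $\mu_k$ directly. No semigroup structure or separate rigidity theorem is needed.
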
 
\footnotetext{Such \(t\) and \(k\), if they exist, are uniquely determined.} 

\noindent\textit{Notation.} 
Throughout the paper, we use the standard big $O$ and little $o$ notations, as well as the Vinogradov notation $\ll, \gg,$ where the implied constants depend only on the subscripted parameters. We denote $e(x):=e^{2\pi i x}$ for $x \in \mathbb{R}.$ 
Given a non-empty finite subset $S \subseteq \mathbb{N}$ and a function (or distribution) $a: S \to \mathbb{C},$ we write 
\begin{align*}
\mathbb{E}_{n \in S} \, a(n):=\frac{1}{|S|}\sum_{n \in S} a(n),
\end{align*}
and
\begin{align*}
\mathbb{E}^{\log}_{n \in S} a(n):=\left( \sum_{n \in S} \frac{1}{n} \right)^{-1}\sum_{n \in S} \frac{a(n)}{n}.
\end{align*}

\section{Pretentious number theory}

\textit{Pretentious number theory} (see \cite{gs} for an introduction) is instrumental in Tao's resolution of the Erdős discrepancy problem \cite{MR3533300}, as well as to the partition regularity of (generalized) Pythagorean pairs established by Frantzikinakis, Klurman, and Moreira \cite{frantzikinakis2023partitionregularitypythagoreanpairs,frantzikinakis2024partitionregularitygeneralizedpythagorean}.  We begin by recalling the following definitions.

\begin{definition}[Pretentious distance] For $f,g \in \mathcal{M},$ and $0<x<y \leq \infty,$ the squared \textit{pretentious distance} between $f$ and $g$ on $(x,y]$ is defined as
\begin{align*}
\mathbb{D}(f,g;x,y)^2:=&\sum_{x<p \leq y} \frac{1-\Re(f(p)\overline{g(p)})}{p}  
=\frac{1}{2} \sum_{x<p \leq y} \frac{|f(p)-g(p)|^2}{p}.
\end{align*}
If $\mathbb{D}(f,g;1,\infty)< \infty,$ we say that $f$ \textit{pretends} to be $g,$ and denote this by $f \sim g.$\footnote{The ordered pair $(\mathcal{M},\mathbb{D})$ forms an extended metric space, and $\sim$ is an equivalence relation.}
\end{definition}

\begin{definition}[Positively modified Dirichlet character]
Let $\chi$ be a Dirichlet character. Its \textit{positively modified Dirichlet character} $\widetilde{\chi}: \mathbb{N} \to \mathbb{T}$ is defined on primes by
\begin{align*}
\widetilde{\chi}(p)=
\begin{cases}
\chi(p) & \mbox{{\normalfont if $\chi(p) \neq 0,$ }} \\
\hfil  1 & \mbox{{\normalfont if $\chi(p) = 0, $ } } 
\end{cases}
\end{align*}
and extended completely multiplicatively.
\end{definition}

\begin{definition}[Pretentious function]
A function $f \in \mathcal{M}$ is \textit{pretentious} if there exist a primitive Dirichlet character $\chi$ and $t \in \mathbb{R}$ such that $f \sim \widetilde{\chi} \cdot n^{it}.$\footnote{The primitive Dirichlet character $\chi$ and $t \in \mathbb{R}$ are uniquely determined.} Otherwise, it is \textit{non-pretentious}.\footnote{It is sometimes called \textit{aperiodic}.}
\end{definition}

\begin{definition}[Strongly non-pretentious function]
A function $f \in \mathcal{M}$ is \textit{strongly non-pretentious} if, for every $R \geq 1,$ we have
\begin{align*}
\lim_{N \to \infty}  \inf_{\substack{ \chi \Mod{r},\, r\leq R\\
                  |t|\leq RN}}
 \D(f,\widetilde{\chi} \cdot n^{it};1,N) = \infty.
\end{align*}
\end{definition}

Many non‑pretentious functions fail to be strongly non‑pretentious. Nevertheless, both non‑pretentious functions and their powers admit an upgrade to strong non‑pretentiousness along a common subsequence.

\begin{lemma}[Simultaneous strong non-pretentiousness on a subsequence]
\label{lem:simultaneous-nonpretentiousness}

Let $\mathcal E\subseteq\N$ be a finite non-empty set. Suppose $f^h$ is
non-pretentious for every $h\in\mathcal E$. Then for every $R \geq 1$, there
exists a strictly increasing sequence of positive integers $(N_j)$ such that
\[ \lim_{j \to \infty}
 \min_{h\in\mathcal E}
 \inf_{\substack{ \chi \Mod{r},\, r\leq R\\
                  |t|\leq RN_j}}
 \D(f^h,\widetilde{{\chi}} \cdot n^{it};1,N_j)
=\infty.
\]

\begin{proof}
Let $H:=\prod_{h \in \mathcal E} h.$ If $f^H$ is pretentious, then  $f^h$ is strongly non-prententious for every $h \in \mathcal{E}$ (see \cite[Corollary 2.7]{charamaras2024multiplicativerecurrencelinearpatterns}). Otherwise, if $f^H$ is non-pretentious, then there exists $(N_j)$ such that (see \cite[Corollary 2.11]{charamaras2024multiplicativerecurrencelinearpatterns} for instance)
\begin{align} \label{eq:stnonpret}
 \lim_{j \to \infty}
 \inf_{\substack{ \psi \Mod{r},\, r\leq HR\\
                  |u|\leq (HR)N_j}}
 \D(f^H,\widetilde{{\psi}} \cdot n^{iu};1,N_j)
=\infty.
\end{align}
For $h \in \mathcal{E},$ let $s_h:=H/h.$ Then for every $\chi \Mod{r}$ with $r \leq R$ and $|t| \leq RN_j,$ Minkowski's inequality gives
\begin{align*}
\D (f^H, \widetilde{\chi}^{s_h}n^{s_h i t};1,N_j) \leq s_h \cdot  \D(f^h,\widetilde{\chi} \cdot n^{i t};1,N_j ),
\end{align*}
and therefore
\begin{align*}
 \D(f^h,\widetilde{\chi} \cdot n^{i t};1,N_j ) \geq \frac{h}{H} \cdot \D (f^H, \widetilde{\chi}^{s_h}n^{s_h i t};1,N_j).
\end{align*}
Since $|s_h t| \leq H(RN_j)$ and $\mathcal E$ is finite, it follows from (\ref{eq:stnonpret}) that
\begin{align*}
 \lim_{j \to \infty} \min_{h \in \mathcal{E}}
 \inf_{\substack{ \chi \Mod{r},\, r\leq R\\
                  |t|\leq RN_j}}
 \D(f^h,\widetilde{{\chi}} \cdot n^{it};1,N_j)
=\infty,
\end{align*}
and the proof is complete.
\end{proof}
\end{lemma} 

\begin{definition}[Multiplicative F\o{}lner sequence]

A sequence $(\Phi_K)$ of non-empty finite subsets of $\mathbb{Q}_{>0}$ is a \textit{multiplicative F\o{}lner sequence} if for every $r \in \mathbb{Q}_{>0},$ we have
\begin{align*}
\lim_{K \to \infty} \frac{|(r \cdot \Phi_K) \cap \Phi_K|}{|\Phi_K|} =1.
\end{align*}
\end{definition}
For instance, one can verify that the sequence $(\Phi_K)$ defined by
\begin{align*}
\Phi_K:= \left\{ \prod_{p \leq K} p^{v_p} \,:\, K<v_p \leq 2K \right\}
\qquad \text{for $K \in \mathbb{N}$}
\end{align*}
is a multiplicative F\o{}lner sequence (see \cite[Proposition 2.5]{frantzikinakis2023partitionregularitypythagoreanpairs}). Throughout the paper, this is the only multiplicative F\o{}lner sequence to be considered.

Using the multiplicative F\o{}lner sequence $(\Phi_K)$, we obtain an orthogonality relation.

\begin{lemma}[Multiplicative orthogonality] \label{lem:orthog}
Let $f \in \mathcal{M}.$ Then
\begin{align*}
\lim_{K \to \infty} \mathbb{E}_{Q \in \Phi_{K}} f(Q) =
\begin{cases}
1 & \mbox{{\normalfont if $f \equiv 1,$ }} \\
\hfil  0 & \mbox{{\normalfont otherwise.} } 
\end{cases}
\end{align*}
Moreover, for any prime $p \leq K$ with $f(p) \neq 1,$ we have
\begin{align*}
| \mathbb{E}_{Q \in \Phi_{K}} f(Q) | \ll \frac{1}{K|1-f(p)|}.
\end{align*}

\begin{proof}
This is essentially \cite[Lemma 3.2]{frantzikinakis2023partitionregularitypythagoreanpairs}. It suffices to assume $f \not\equiv 1.$ By definition, we have
\begin{align*}
 \mathbb{E}_{Q \in \Phi_{K}} f(Q) = \prod_{p \leq K} \left(\frac{1}{K} \sum_{K < v_p \leq 2K} f(p)^{v_p} \right).
\end{align*}
If $f(p) \neq 1,$ then
\begin{align*}
\sum_{K < v_p \leq 2K} f(p)^{v_p} = f(p)^{K+1} \cdot \frac{1-f(p)^{K}}{1-f(p)}, 
\end{align*}
which implies
\begin{align*}
 |\mathbb{E}_{Q \in \Phi_{K}} f(Q)| \leq 
 \prod_{\substack{p \leq K\\f(p) \neq 1}} \frac{2}{K|1-f(p)|},
\end{align*}
and the lemma follows.
\end{proof}
\end{lemma}

For $n \in \mathbb{N},$ by writing $f(n)=e(g(n))$ for some suitable additive function $g:\mathbb{N} \to \mathbb{R},$ the following is a consequence of the Turán--Kubilius inequality.

\begin{lemma}[Concentration inequality] \label{lem:concentration}
Let $f \in \mathcal{M},$ and $\chi$ be a Dirichlet character of modulus $q.$ Let $K,Q \geq 1$ be integers for which 
\begin{align*}
 \prod_{p \leq K} p \mid Q \qquad \text{\rm and} \qquad q \mid Q,
\end{align*}
and all prime divisors of $Q$ are at most $K$. 
 If $N > K$, then for any positive integer $(a,Q)=1$ and $t \in \mathbb{R},$ we have
\begin{gather*}
\E_{n\leq N}^{\log}
\left|
f(Qn+a)-\chi(a)(Qn)^{it}
\exp(-F(\chi,t;K,N))
\right|
\\
\ll
\D(f,\widetilde\chi \cdot n^{it};K,\infty)
+K^{-1/2}+o_{Q,a,t;N \to \infty}(1),
\end{gather*}
where
\begin{align*}
F(\chi,t;K,N):=\sum_{\substack{K<p \leq N}} \frac{1-f(p)\overline{\chi}(p)p^{-it}}{p}.
\end{align*}

\begin{proof}
This is a logarithmically weighted version of \cite[Lemma 2.5]{MR4328688} (see also \cite[Lemma 2.13]{charamaras2024multiplicativerecurrencelinearpatterns}).
\end{proof}

\end{lemma}

For pretentious functions, the concentration inequality shows that their (logarithmic) auto‑correlation stabilizes.

\begin{lemma}[Pretentious logarithmic two-point correlation] \label{lem:pretcorr}
Let $f \in \mathcal{M},$ and $\chi$ be a Dirichlet character of modulus $q.$ Let $K,Q_1, Q_2 \geq 1$ be integers for which 
\begin{align*}
 \prod_{p \leq K} p \mid  \operatorname{gcd}(Q_1,Q_2), \qquad q \mid \operatorname{gcd}(Q_1,Q_2), 
\end{align*}
and all prime divisors of $Q_1, Q_2$ are at most $K$. If $N > K$, then for any positive integers $(a_1,Q_1)=1, (a_2,Q_2)=1$ and $t \in \mathbb{R},$ we have
\begin{gather*}
\E_{n\leq N}^{\log}
f(Q_1n+a_1)\overline{f(Q_2n+a_2)} 
=\chi(a_1)\overline{\chi(a_2)}(Q_1/Q_2)^{it}
\exp(-2\Re F(\chi,t;K,N)) \\
+O(
\D(f,\widetilde\chi \cdot n^{it};K,\infty)
+K^{-1/2})++o_{Q_1,Q_2,a_1,a_2,t;N \to \infty}(1).
\end{gather*}
\begin{proof}
Using the inequality $|z_1 \overline{z_2}-w_1\overline{w_2}| \leq |z_1-w_1|+|z_2-w_2|$ for $z_1,z_2,w_1,w_2 \in \mathbb{T},$ the lemma follows immediately from Lemma \ref{lem:concentration}.
\end{proof}

\end{lemma}

For (strongly) non-pretentious functions, Tao's estimate shows that their (logarithmic) auto-correlation diminishes.

\begin{lemma}[Non-pretentious logarithmic two-point correlation]
\label{lem:two-point-correlation}
Let $a,b,c,d\in\N$ satisfy
\[
 ad-bc\neq0.
\]
For every $\epsilon>0$, there exists
$R=R(\epsilon,a,b,c,d)\geq1$ such that the following
holds. Let $f \in \mathcal{M}$ and $N> R.$ If
\[
 \inf_{\substack{ \chi \Mod{r},\, r\leq R\\
                  |t|\leq RN}}
 \D(f,\widetilde{\chi} \cdot n^{it};1,N)\geq R,
\]
then
\[
 \left|
 \E_{n\leq N}^{\log}
 f(an+b)\overline{f(cn+d)}
 \right|\leq\epsilon.
\]

\begin{proof}
See \cite[Theorem 1.3]{MR3569059}.
\end{proof}

\end{lemma}

\section{Preparation}

To prove Theorem \ref{thm:main}, we follow the same underlying strategy as in \cite{charamaras2024multiplicativerecurrencelinearpatterns}. However, by starting with the reduction processes introduced in \cite{leung2025multiplicativerecurrencemobiustransformations}, it suffices to verify certain special cases, leading to a streamlined argument.

\begin{lemma}[Arithmetic normalization] \label{lem:normalization}
Let $a \geq 1$ and $b,d \geq 0$ be integers satisfying $b \neq d$ and $a \mid \operatorname{lcm}(b,d).$ Set 
\[
 g:=\operatorname{gcd}(a,b,d),
 \quad
 a':=a/g,
 \quad
 b':=b/g,
 \quad
 d':=d/g.
\]
Then $\operatorname{gcd}(a', b'-d')=1$ and $a' \mid b' d'.$
\begin{proof}
Without loss of generality, assume that \(b>d\). If \(a=1\) or \(d=0\),
then both assertions are immediate. Hence assume that \(a\geq 2\) and
\(b,d>0\).

Since $\operatorname{lcm}(b,d) = g\operatorname{lcm}(b',d'),$ the assumption \(a\mid\operatorname{lcm}(b,d)\) implies that $a'\mid\operatorname{lcm}(b',d').$
Now let \(p\) be any prime divisor of \(a'\). Since \(a'\mid b'd'\), we
have \(p\mid b'd'\). On the other hand,
\(\gcd(a',b',d')=1\), so \(p\) cannot divide both \(b'\) and \(d'\).
Thus \(p\) divides exactly one of \(b'\) and \(d'\), and consequently
\(p\nmid b'-d'\). Since this holds for every prime divisor \(p\) of
\(a'\), we conclude that $\gcd(a',b'-d')=1.$
\end{proof}

\end{lemma}

\begin{lemma}[Case reduction]  \label{lem:reduction}
 Let $a \geq 1$ and $b, d \geq 0$ be integers. Suppose $b>d, \, \gcd(a,b-d)=1,$ and $a \mid bd.$ Then there exist $A,B\in\N$ with $B\geq 2$ and $A=B(B-1)$, and
$C,D\in\N$ such that, for every integer $m\geq 1$, if $n=Cm+D$, then
\[
 \frac{an+b}{an+d}
 =\frac{Am+B}{Am+B-1}.
\]
\end{lemma}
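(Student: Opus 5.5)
The idea is to factor out the common difference $k:=b-d\geq 1$. We restrict $n$ to a residue class on which $k$ divides both $an+d$ and $an+b$, so the fraction becomes $(M+1)/M$ with $M=(an+d)/k$. We then pass to a further subprogression on which $M$ runs through $Am+B-1$ with $A=B(B-1)$. The hypothesis $a\mid bd$ will be used exactly once, to show such a $B$ exists.

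\textbf{Step 1: reduce to $(M+1)/M$.} Since $\gcd(a,k)=1$, pick $n_0\in\N$ with $an_0+d\equiv 0\Mod{k}$. Then $an_0+b\equiv 0\Mod{k}$ too, because the two numbers differ by $k$. For $n=n_0+kj$ with $j\geq 0$, put
\[
M_j:=\frac{an+d}{k}=M_0+aj, \qquad M_0:=\frac{an_0+d}{k}.
\]
Then $\frac{an+b}{an+d}=\frac{M_j+1}{M_j}$. Here $M_0\geq 1$, after enlarging $n_0$ by a multiple of $k$ if necessary.

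\textbf{Step 2: the key congruence, $a\mid M_0(M_0+1)$.} Reducing modulo $a$ gives $kM_0=an_0+d\equiv d\Mod{a}$ and $k(M_0+1)=an_0+b\equiv b\Mod{a}$. Multiplying, $k^2M_0(M_0+1)\equiv bd\equiv 0\Mod{a}$. Since $k$ is invertible modulo $a$, we get $a\mid M_0(M_0+1)$. This is the only place where $a\mid bd$ is used, and the only step that is not bookkeeping.

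\textbf{Step 3: choose $B$, $A$, $C$, $D$.} Set $B:=M_0+1+at$ for any integer $t\geq 0$; then $B\geq 2$. We have $B-1\equiv M_0$ and $B\equiv M_0+1\Mod{a}$, so Step 2 gives $a\mid B(B-1)=:A$. The values $M_j$ form the progression $M_0+a\N_{\geq 0}$. Since $B-1$ lies in this progression and $a\mid A$, the sub-progression $M=Am+B-1$ ($m\geq 1$) is contained in it. Concretely, it corresponds to
\[
j=\frac{B-1-M_0}{a}+\frac{A}{a}\,m,
\]
that is, $n=Cm+D$ with
\[
C:=\frac{kA}{a},\qquad D:=n_0+\frac{k(B-1-M_0)}{a}.
\]
Both $C$ and $D$ are positive integers.

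\textbf{Step 4: conclusion.} Substituting, for every $m\geq 1$,
\[
\frac{an+b}{an+d}=\frac{M+1}{M}=\frac{Am+B}{Am+B-1},
\]
which is the assertion of the lemma.
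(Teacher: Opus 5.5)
Your proof is correct and complete. Step 1 uses only $\gcd(a,b-d)=1$ to cancel the common factor $k=b-d$. Step 2 turns $a\mid bd$ into $a\mid M_0(M_0+1)$, using that $k$ is invertible modulo $a$. With $B=M_0+1+at$ you get $j=t+(A/a)m$, hence $n=(kA/a)m+(n_0+kt)$ and $(an+d)/k=Am+B-1$, as required. One minor point: $M_0\geq 1$ holds automatically, since $an_0+d$ is a positive multiple of $k$, so no enlarging of $n_0$ is needed.

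The paper gives no argument for this lemma; it simply cites Lemma 3.2 of Leung--T\'afula. So your write-up is a self-contained substitute rather than a competing route. It makes the roles of the two hypotheses transparent: the coprimality puts the denominators $(an+d)/k$ on the residue class $M_0 \Mod{a}$, and divisibility then forces $a\mid M(M+1)$ on that class. It also shows that every $B\geq 2$ with $B\equiv M_0+1 \Mod{a}$ works. The paper's citation keeps the exposition short, but hides this simple mechanism.
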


\begin{proof}
See \cite[Lemma 3.2]{leung2025multiplicativerecurrencemobiustransformations}.
\end{proof}

Therefore, it suffices to prove Theorem \ref{thm:main} in the above cases, further restricted to a sub‑progression chosen so that, after invoking the Erdős--Turán inequality or Fourier inversion on finite cyclic groups, both Lemma \ref{lem:pretcorr} and Lemma \ref{lem:two-point-correlation} become applicable.

\begin{lemma}[Restriction to a sub-progression]
\label{lem:bezout-progression}
Let $q \geq 1$ and $B \geq 2$ be integers. Then there exists $K(q,B)\geq 1$ such that for every integer $K \geq K(q,B),$ the following holds:
\begin{enumerate}[label={\rm (\alph*)}]
    \item $qB(B-1) \mid \prod_{p \leq K} p^K;$ 
    \item for every $Q = \prod_{p \leq K} p^{v_p} \in \Phi_K,$ denote 
    \begin{align*}
    Q^+:=\prod_{\substack{p \leq K\\ p \mid B}} p^{v_p}, \qquad 
    Q^-:=\prod_{\substack{p \leq K\\ p \nmid B}} p^{v_p},
    \end{align*}
    and 
    \begin{align*}
       L:=Q^2, \qquad R^+:=(B-1)Q^-Q, \qquad R^-:=BQ^+Q.
    \end{align*}
    There exist integers $0 \leq \ell <L, \, 0 \leq r^{\pm} <R^{\pm}$ such that for every integer $n \geq 0,$ we have
    \begin{align*}
    (B-1)(Ln+\ell)+1=Q^{+}(R^+n+r^+),
    \end{align*}
    and
    \begin{align*}
            B(Ln+\ell)+1=Q^-(R^-n+r^-);
    \end{align*}
    \item $r^+ \equiv r^- \Mod{Q}$ and $\operatorname{gcd}(r^{\pm},R^{\pm})=1;$
    \item $R^+r^{-}-R^-r^+=-Q \neq 0.$
\end{enumerate}

\begin{proof}
Let $Q \in \Phi_K$ and set 
\[
 S^+:=(B-1)Q^-,
 \qquad
 S^-:=BQ^+.
\]
Since every prime $p\leq K$ divides exactly one of $S^+$ and $S^-,$ we have
\begin{equation}
 (S^+,S^-)=1,
 \qquad
 (S^+-S^-,Q)=1.
 \label{eq:R-coprime}
\end{equation}
Let $(r_0^+,r_0^-)$ be an integral solution to the Bézout equation 
\begin{align} \label{eq:bezout}
S^- r^+ -S^+ r^- =1. 
\end{align}
Then all integral solutions $(r^+, r^-)$ are given by 
\begin{align*}
r^+=S^+ k+r_0^+, \qquad r^-=S^-k+r_0^- \qquad \text{for $k \in \mathbb{Z}.$}
\end{align*}
In particular, the congruence 
\begin{align} \label{eq:r+r-Q}
r^+ \equiv r^- \Mod{Q}
\end{align}
is equivalent to
\begin{align*}
(S^+ - S^-) k + (r_0^+ - r_0^-) \equiv 0 \Mod{Q},
\end{align*}
which is solvable by (\ref{eq:R-coprime}). Replacing $k$ by $k+Qj$ for some integer $j \in \mathbb{Z}$
preserves this congruence and changes $r^+$ by
$S^+Qj=(B-1)Q^-Qj=R^+j.$ We may therefore arrange
\begin{equation*}
 0 \leq r^+< R^+,
\end{equation*}
and 
\begin{align*}
0 \leq r^-= \frac{S^-r^+-1}{S^+} < \frac{S^- R^+}{S^+} =R^-.
\end{align*}
Also, by (\ref{eq:bezout}) and (\ref{eq:r+r-Q}), we have 
\begin{align*}
\operatorname{gcd}(r^{\pm}, R^{\pm})=1.
\end{align*}
Reducing (\ref{eq:bezout}) modulo $B-1$ gives
\begin{align*}
BQ^+ r^+ \equiv Q^+r^+ \equiv  1 \Mod{B-1}.
\end{align*}
Therefore, $Q^+r^+-1$ is divisible by $B-1,$ and we define
\begin{align} \label{eq:ell}
\ell:=\frac{Q^+r^+-1}{B-1}.
\end{align}
Since $Q^+r^+ < Q^+ R^+ = (B-1)Q^2,$ we have 
\begin{align*}
0 \leq \ell <Q^2=L.
\end{align*}
Using (\ref{eq:bezout}) and (\ref{eq:ell}), we obtain
\begin{align} \label{eq:ell2}
\ell=\frac{Q^-r^- -1}{B}.
\end{align}
Since $Q^+R^+=(B-1)Q^2=(B-1)L,$ it follows from (\ref{eq:ell}) that
   \begin{align*}
    (B-1)(Ln+\ell)+1=Q^{+}(R^+n+r^+)
    \end{align*}
    for every integer $n \geq 0.$ Similarly, since $Q^-R^-=BQ^2=BL,$ it follows from (\ref{eq:ell2}) that
    \begin{align*}
        B(Ln+\ell)+1=Q^-(R^-n+r^-)
    \end{align*}
    for every integer $n \geq 0.$ Finally, the expression (\ref{eq:bezout}) gives
    \begin{align*}
    R^+r^{-}-R^-r^+ = Q(S^+r^- - S^-r^+) = -Q,
    \end{align*}
    and the lemma follows.
\end{proof}

\end{lemma}

\section{Fourier Flatness}

Let $f \in \mathcal{M},$ $B \geq 2$ be an integer and $A=B(B-1).$ 
Given $K \in \mathbb{N}$ and $Q \in \Phi_K,$ let $L_Q$ and $\ell_Q$ be furnished by Lemma \ref{lem:bezout-progression}, and define
\begin{align*}
z_{Q,n}:=f \left( \frac{A(L_Q n + \ell_Q)+B}{A(L_Qn  + \ell_Q)+(B-1)} \right)
\qquad \text{for $n \in \mathbb{N},$}
\end{align*}
and
\begin{align*}
{\sigma}_{K,N}:=\mathbb{E}_{Q \in \Phi_K} 
\mathbb{E}^{\log}_{n \leq N} \delta_{z_{Q,n}} 
\qquad \text{for $N \in \mathbb{N}.$}
\end{align*}
Also, we denote its Fourier coefficients by
\begin{align*}
 \widehat\sigma_{K,N}(h):=\int_\T z^h\,d\sigma_{K,N}(z) =
 \mathbb{E}_{Q \in \Phi_K} 
\mathbb{E}^{\log}_{n \leq N} z_{Q,n}^h
 \qquad \text{for $h \in \mathbb{Z}.$}
\end{align*}
We show that the first $H$ Fourier coefficients are small unless $f \cdot n^{-it}$ is of order at most $H$ for some $t \in \mathbb{R}.$
\begin{proposition}\label{prop:mode-annihilation}
Let $f \in \mathcal{M}$ and $H \in \mathbb{N}.$ Suppose $f^h \not\equiv n^{it}$ for each $1 \leq h \leq H$ and $t \in \mathbb{R}.$ 
Then
\begin{align*}
\lim_{K \to \infty}\liminf_{N \to \infty}\max_{1 \leq h \leq H} |\widehat{\sigma}_{K,N}(h)|=0.
\end{align*}

\begin{proof}
Decompose $\{1,\ldots,H\}=\mathcal{H} \cup \mathcal{E},$ where 
\begin{align*}
\mathcal{H}:=\{ 1 \leq h \leq H \,:\, f^h \text{ is pretentious} \},
\end{align*}
and
\begin{align*}
\mathcal{E}:=\{ 1 \leq h \leq H \,:\, f^h \text{ is non-pretentious} \}.
\end{align*}
For $h \in \mathcal{H},$ there exist a primitive character $\chi_h$ of conductor $q_h$ and $t_h \in \mathbb{R}$ such that $f^h \sim \widetilde{\chi}_h \cdot n^{it_h}.$ We shall apply Lemma \ref{lem:bezout-progression} with 
\begin{align*}
q=\prod_{h \in \mathcal H} q_h.
\end{align*}
Since $A=B(B-1),$ Lemma \ref{lem:bezout-progression} (b) gives
\begin{align*}
A(Ln+\ell)+B=BQ^+(R^+n+r^+)
\end{align*}
and
\begin{align*}
A(Ln+\ell)+(B-1)=(B-1)Q^-(R^-n+r^-)
\end{align*}
for every integer $n \geq 0.$ Suppressing the subscript $Q,$ for every $1 \leq h \leq H ,$ we have
\begin{align} \label{eq:zh}
z_{Q,n}^h=f^h \left( \frac{BQ^+}{(B-1)Q^-} \right) f^h (R^+n+r^+)  \overline{f^h (R^-n+r^-)}.
\end{align}

Let $h \in \mathcal{H}$ and $f_h:=f^h \cdot n^{-it_h}$. Then $f_h \sim \widetilde\chi_h$ and
$\D(f^h,\widetilde\chi_h \cdot n^{it_h} ;K,\infty) = \D(f_h,\widetilde\chi_h ;K,\infty)$ for every $K \geq K(q,B).$
Applying Lemma \ref{lem:bezout-progression} (a) and (b), we have
\begin{align*}
 \prod_{p \leq K} p  \mid \operatorname{gcd}(R^+,R^-) \qquad \text{\rm and} \qquad q_h \mid \operatorname{gcd}(R^+,R^-),
\end{align*}
and all prime divisors of $R^+, R^-$ are at most $K$.
Therefore, Lemma \ref{lem:pretcorr} is applicable and gives
\begin{gather} 
\E_{n\leq N}^{\log}
f^h(R^+n+r^+)\overline{f^h(R^-n+r^-)} 
=\chi_h(r^+)\overline{\chi_h(r^-)}(R^+/R^-)^{it_h}
\exp(-2\Re F_h(\chi_h;K,N)) \nonumber\\
+O(
\D(f_h,\widetilde\chi_h ;K,\infty)
+K^{-1/2}) + o_{K,t_h;N \to \infty}(1)\label{eq:corrfh}
\end{gather}
provided that $N>K,$ where
\begin{align*}
F_h(\chi_h;K,N):=\sum_{\substack{K<p \leq N}} \frac{1-f_h(p)\overline{\chi_h}(p)}{p}.
\end{align*}
By Lemma \ref{lem:bezout-progression} (a), (b), and (c) with our choice of $q$, we have 
$\chi_h(r^+)=\chi_h(r^-) \neq 0$ and
\begin{align*}
\frac{R^+}{R^-}=\frac{(B-1)Q^-}{BQ^+}.
\end{align*}
Combining with (\ref{eq:zh}) and (\ref{eq:corrfh}), we obtain
\begin{gather} 
\mathbb{E}^{\log}_{n \leq N} z_{Q,n}^h =  f_h \left(\frac{BQ^+}{(B-1)Q^-} \right) \exp(-2\Re F_h(\chi_h;K,N)) \nonumber \\
+O(\D(f_h,\widetilde\chi_h;K,\infty) \label{eq:elogzh}
+K^{-1/2})+o_{K,t_h;N \to \infty}(1).
\end{gather}
Let $g_h \in \mathcal{M}$ be defined on primes by
\begin{align*}
g_h(p) :=
\begin{cases}
f_h(p) & \mbox{\rm if $p \mid B,$} \\
\overline{f_h(p)}   & \mbox{\rm if $p \nmid B,$}
\end{cases}
\end{align*}
and extended completely multiplicatively. Then
\begin{align} \label{eq:gh}
 \mathbb{E}_{Q \in \Phi_K} f_h\left( \frac{BQ^+}{(B-1)Q^-} \right) 
&= f_h  \left( \frac{B}{B-1} \right) \mathbb{E}_{Q \in \Phi_K} f _h (Q^+) 
\overline{f _h (Q^-)} \nonumber \\
&=  g_h ( B(B-1)) \mathbb{E}_{Q \in \Phi_K} g_h(Q).
\end{align}
Since $g_h \not\equiv 1$ by assumption, there exists a prime $p$ such that $g_h(p) \neq 1.$ If $K>p,$ then Lemma \ref{lem:orthog} gives
\begin{align*}
| \mathbb{E}_{Q \in \Phi_{K}} g_h(Q) | \ll \frac{1}{K|1-g_h(p)|}.
\end{align*}
Combining with (\ref{eq:elogzh}) and (\ref{eq:gh}), we obtain
\begin{align*}
|\mathbb{E}_{Q \in \Phi_{K}}
\mathbb{E}^{\log}_{n \leq N} z_{Q,n}^h | &\ll   \frac{1}{K|1-g_h(p)|} \cdot 
\exp(-2\Re F_h(\chi_h;K,N))
+\D(f_h,\widetilde\chi_h;K,\infty)+K^{-1/2} \\
&\ll \frac{1}{K|1-f_h(p)|}
+\D(f_h,\widetilde\chi_h;K,\infty)+K^{-1/2}+o_{K,t_h;N \to \infty}(1).
\end{align*}
Since $f_h \sim \widetilde{\chi}_h,$ or equivalently $\D(f_h,\widetilde\chi_h;1,\infty)<\infty,$ we have
\begin{align*}
\lim_{K \to \infty} \D(f_h,\widetilde\chi_h;K,\infty) =0.
\end{align*}
Therefore, we conclude that
\begin{align} \label{eq:hinH}
\lim_{K \to \infty}\limsup_{N \to \infty} \max_{h \in \mathcal{H}} |\mathbb{E}_{Q \in \Phi_{K}} 
\mathbb{E}^{\log}_{n \leq N} z_{Q,n}^h |  = 0.
\end{align}

For $h \in \mathcal{E},$ by Lemma \ref{lem:simultaneous-nonpretentiousness}, there exists a strictly increasing sequence of positive integers $(N_j)$ such that
\begin{align} \label{eq:taocond}
 \min_{h\in\mathcal E}
 \inf_{\substack{ \chi \Mod{r},\, r\leq R\\
                  |t|\leq RN_j}}
 \D(f^h,\widetilde{{\chi}} \cdot n^{it};1,N_j)
\geq R
\end{align}
for some suitable $R \geq 1$ to be chosen later, provided that $N_j>R$ is sufficiently large. Let $K \geq K(q,B)$ and $Q \in \Phi_K.$ Applying Lemma \ref{lem:bezout-progression} (d), we have $R_Q^+r_Q^--R_Q^-r_Q^+ = -Q \neq 0,$ and therefore Lemma \ref{lem:two-point-correlation} is applicable. For every $\epsilon>0,$ choose
\begin{align*}
R:=\max_{Q \in \Phi_K} R(\epsilon,R_Q^+,r_Q^+,R_Q^-,r_Q^-).
\end{align*} 
Since (\ref{eq:taocond}) is satisfied, it follows from Lemma \ref{lem:two-point-correlation} that
\[
\max_{Q \in \Phi_K}  \left|
 \E_{n\leq N_j}^{\log}
 f^h(R_Q^+n+r_Q^+)\overline{f^h(R_Q^-n+r_Q^-)}
 \right|\leq\epsilon.
\]
Using (\ref{eq:zh}), we obtain
\begin{align*}
|\mathbb{E}_{Q \in \Phi_{K}}
\mathbb{E}^{\log}_{n \leq N_j} z_{Q,n}^h | \leq \epsilon.
\end{align*}
Since $\epsilon>0$ can be chosen arbitrarily small, we conclude that
\begin{align} \label{eq:hinE}
\lim_{K \to \infty}\liminf_{N \to \infty} \max_{h \in \mathcal{E}}
|\mathbb{E}_{Q \in \Phi_{K}}
\mathbb{E}^{\log}_{n \leq N_j} z_{Q,n}^h | =0.
\end{align}
Finally, recall that $\widehat{\sigma}_{K,N}(h)=\mathbb{E}_{Q \in \Phi_{K}}
\mathbb{E}^{\log}_{n \leq N_j} z_{Q,n}^h$ for $h \in \mathbb{Z}.$ Combining (\ref{eq:hinH}) and (\ref{eq:hinE}), we arrive at
\begin{align*}
\lim_{K \to \infty}\liminf_{N \to \infty}\max_{1 \leq h \leq H} |\widehat{\sigma}_{K,N}(h)|
= 0,
\end{align*}
and the proof is complete.
\end{proof}

\end{proposition}

\section{Proof of Theorem \ref{thm:main}: Infinite order}

Applying the Erdős--Turán inequality, we prove Theorem \ref{thm:main} in the case where 
\(f \cdot n^{-it}\) has infinite order for every \(t \in \mathbb{R}\).

\begin{lemma}[Erdős--Turán inequality] \label{lem:erdos-turan}

Let $\nu$ be a probability measure on $\T$, and
denote its Fourier coefficients by
\[
 \widehat\nu(h):=\int_{\T}z^h\,d\nu(z) \qquad \text{for $h\in\mathbb Z.$ }
\]
Also, denote by $\mathcal{I}$ the collection of half-open arcs $I\subseteq\T$. Then for every $H \in \N$, we have
\[
 \sup_{I \in \mathcal{I}}
 \left|\nu(I)-\lambda(I)\right|
 \ll
 \frac1H+\sum_{h=1}^{H}\frac{|\widehat\nu(h)|}{h},
\]
where $\lambda$ denotes the normalized Lebesgue (Haar) measure on $\mathbb{T}$.
\end{lemma}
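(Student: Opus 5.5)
The statement to prove is the classical Erdős--Turán inequality. I would derive it by smoothing the indicator of an arc with trigonometric polynomials of degree at most $H$ and then using Parseval-type identities. The key tool is the Beurling--Selberg construction, in its Vaaler form. For an arc $I\subseteq\T$ there are trigonometric polynomials
\[
S_I^{\pm}(z)=\sum_{|h|\leq H}c_h^{\pm}z^{h}
\]
with two properties. They sandwich the indicator, $S_I^-\leq \mathbf 1_I\leq S_I^+$ on $\T$. Their constant terms satisfy $c_0^{\pm}=\lambda(I)\pm\frac{1}{H+1}$. Their other coefficients satisfy $|c_h^{\pm}|\leq \frac{1}{H+1}+\frac{1}{\pi|h|}$, which is $\ll 1/|h|$ for $1\leq|h|\leq H$. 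I would quote this construction, or build it by convolving the Beurling function with the Fejér kernel.

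Given the majorant, integrating against $\nu$ yields
\[
\nu(I)\leq\int_\T S_I^+\,d\nu=\sum_{|h|\leq H}c_h^+\,\overline{\widehat\nu(h)}\quad\text{(up to the sign convention on $h$)}.
\]
The $h=0$ term equals $\lambda(I)+\frac{1}{H+1}$. The remaining terms are bounded in absolute value by $\sum_{1\leq|h|\leq H}\frac{C}{|h|}|\widehat\nu(h)|$. Since $\nu$ is a real measure, $\widehat\nu(-h)=\overline{\widehat\nu(h)}$, so $|\widehat\nu(-h)|=|\widehat\nu(h)|$. This turns the sum into $2C\sum_{h=1}^H|\widehat\nu(h)|/h$, which gives $\nu(I)-\lambda(I)\ll \frac1H+\sum_{h=1}^H\frac{|\widehat\nu(h)|}{h}$. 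The minorant $S_I^-$ gives the matching lower bound in the same way. Taking the supremum over $I\in\mathcal I$ finishes the argument, because the bound is uniform in $I$. Half-openness of the arcs causes no trouble, since $S_I^\pm$ bound the indicator pointwise everywhere, endpoints included.

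The main obstacle is the existence of $S_I^\pm$ with both the $O(1/H)$ constant-term error and the $O(1/h)$ coefficient decay. I would try the following self-contained route. Start from the sawtooth function $\psi(x)=\{x\}-\tfrac12$, and write $\mathbf 1_I$ for the arc $[\alpha,\beta)$ as $(\beta-\alpha)+\psi(x-\beta)-\psi(x-\alpha)$. Then use Vaaler's trigonometric polynomial approximation $\psi^*$ of degree $H$. It satisfies $|\psi-\psi^*|\leq\frac{1}{2H+2}\Delta_H$, where $\Delta_H$ is the Fejér kernel with nonnegative coefficients $\frac{1}{H+1}\bigl(1-\frac{|h|}{H+1}\bigr)$, and the coefficients of $\psi^*$ are $O(1/|h|)$. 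Integrating $|\psi-\psi^*|$ against $\nu$ bounds the error by $\frac{1}{H+1}\sum_{|h|\leq H}|\widehat\nu(h)|\bigl(1-\frac{|h|}{H+1}\bigr)$. Each term there with $h\neq0$ is at most $\frac{|\widehat\nu(h)|}{H+1}\leq\frac{|\widehat\nu(h)|}{|h|}$, and the $h=0$ term is $\frac{1}{H+1}$. So the same final bound follows without needing the one-sided sandwich. I would simply cite Vaaler's lemma as standard rather than reprove it.
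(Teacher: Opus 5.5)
Your argument is correct. The paper does not prove this lemma: it imports the Erd\H{o}s--Tur\'an inequality as a black box, citing Ruzsa (Section 2) and Klurman--Mangerel (Lemma 2.3). It only uses it qualitatively, with an absolute implied constant, in Proposition~\ref{prop:inf}. So you are supplying one of the standard derivations rather than competing with an in-paper argument.

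Both of your routes work. In the Selberg route, the majorant dominates the indicator of the closed arc and the minorant is dominated by that of the open arc, so half-open arcs are indeed harmless. The coefficient bound $|c_h^\pm|\le\frac{1}{H+1}+\frac{1}{\pi|h|}$ then gives the explicit form $|\nu(I)-\lambda(I)|\le\frac{1}{H+1}+3\sum_{h=1}^{H}\frac{|\widehat\nu(h)|}{h}$. The Vaaler route gives the same constant $2+\frac{2}{\pi}<3$ without one-sided approximants.

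What your approach buys is an explicit constant and a visible mechanism: a band-limited approximation of the indicator, where truncating at frequency $H$ costs $1/H$. What the paper's citation buys is brevity, which is all it needs. Note, though, that you also end by citing a black box, the Beurling--Selberg or Vaaler extremal construction. That result is essentially as deep as the inequality itself, so the gain in self-containedness is modest.

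There is one small slip in the Vaaler route. The correct bound is
\[
|\psi(x)-\psi^*(x)|\le\frac{1}{2H+2}\sum_{|h|\le H}\Bigl(1-\frac{|h|}{H+1}\Bigr)e(hx).
\]
That is, the Fej\'er kernel has coefficients $1-\frac{|h|}{H+1}$ and value $H+1$ at $x=0$, not coefficients $\frac{1}{H+1}\bigl(1-\frac{|h|}{H+1}\bigr)$. With your normalization the inequality fails at $x=0$. There $\psi^*$, being odd, vanishes, so $|\psi(0)-\psi^*(0)|=\frac12$, while your right-hand side is only $\frac{1}{2(H+1)}$. The integrated error you then write down, $\frac{1}{H+1}\sum_{|h|\le H}\bigl(1-\frac{|h|}{H+1}\bigr)|\widehat\nu(h)|$, is exactly what the correct normalization gives: two sawtooth terms, each contributing $\frac{1}{2H+2}$ times the kernel. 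So your conclusion is unaffected.
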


\begin{proof}
See \cite[Section 2]{MR1218222}, and also \cite[Lemma 2.3]{MR3856825}.
\end{proof}

Applying Lemma \ref{lem:normalization} and then Lemma \ref{lem:reduction}, it suffices to verify the special case.

\begin{proposition} \label{prop:inf}
Let $f \in \mathcal{M}$ and $A,B\in\N$ with $B\geq 2$ and $A=B(B-1).$ Suppose $f \cdot n^{-it} $ has infinite order for every $t \in \mathbb{R}.$ Then
\begin{align*}
\omega \left(  f \left( \frac{Am+B}{Am+(B-1)} \right)  \right)= \mathbb{T}.
\end{align*}

\begin{proof}
It suffices to show that 
\begin{align*}
\lim_{K \to \infty}\liminf_{N \to \infty} \sup_{I \in \mathcal{I}}
 \left|{\sigma}_{K,N}(I)-\lambda(I)\right| = 0.
\end{align*}
Applying Lemma \ref{lem:erdos-turan} with $\nu={\sigma}_{K,N},$ for every $H \in \mathbb{N},$ we have
\begin{align*}
 \sup_{I \in \mathcal{I}}
 \left|{\sigma}_{K,N}(I)-\lambda(I)\right| \ll 
  \frac1H+\sum_{h=1}^{H}\frac{|\widehat{\sigma}_{K,N}(h)|}{h}.
\end{align*}
Since the assumption  $f \cdot n^{-it} $ has infinite order for every $t \in \mathbb{R}$ is equivalent to $f^h \not\equiv n^{it}$ for every $h \in \mathbb{N}$ and $t \in \mathbb{R}$, Proposition \ref{prop:mode-annihilation} is applicable and gives 
\begin{align*}
\lim_{K \to \infty}\liminf_{N \to \infty} \sup_{I \in \mathcal{I}}
 \left|{\sigma}_{K,N}(I)-\lambda(I)\right|  \ll \frac{1}{H}.
\end{align*}
Since $H \in \N$ can be chosen arbitrarily large, the proposition follows.
\end{proof}

\end{proposition}


\section{Proof of Theorem \ref{thm:main}: Finite order}

Applying Fourier inversion on finite cyclic groups, we prove Theorem \ref{thm:main} in the case where 
\(f \cdot n^{-it}\) has finite order for some \(t \in \mathbb{R}\).

\begin{lemma}[Fourier inversion on finite cyclic groups]
\label{lem:finite-fourier}
Given $k \in \mathbb{N},$ let $\nu$ be a probability measure supported on $\mu_k$, and denote
\begin{align*}
\widehat\nu(h):=\int_{\mu_k}z^h\,d\nu(z) \qquad \text{for $h\in\mathbb Z.$ }
\end{align*}
Then for every $\zeta\in\mu_k$, we have
\[
 \nu(\{\zeta\})
 =\frac1k\sum_{h=0}^{k-1}
     \widehat\nu(h) \zeta^{-h}.
\]

\begin{proof}
The lemma follows immediately by integrating the orthogonality relation on $\mu_k$ against $\nu.$
\end{proof}
\end{lemma}




Applying Lemma \ref{lem:normalization} and then Lemma \ref{lem:reduction}, it suffices to verify the special case.

\begin{proposition} \label{prop:fin}
Let $f \in \mathcal{M}$ and $A,B\in\N$ with $B\geq 2$ and $A=B(B-1).$ Suppose $f \cdot n^{-it} $ has order $k \in \mathbb{N}$ for some $t \in \mathbb{R}.$ Then
\begin{align*}
\omega \left(  f \left( \frac{Am+B}{Am+(B-1)} \right)  \right)= \mu_k.
\end{align*}
\begin{proof}
Since
\[ \lim_{m \to \infty}
\left(\frac{Am+B}{Am+(B-1)}\right)^{it} = 1,
\]
replacing \(f\) by \(f\cdot n^{-it}\) does not change
the set of limit points. We may therefore assume that \(f\) has order
\(k\). As the case \(k=1\) is clear, we assume \(k\geq 2\).

Since \(\sigma_{K,N}\) is supported on \(\mu_k\), it suffices to show that
\begin{align*}
\lim_{K\to\infty}\liminf_{N\to\infty}
\max_{\zeta\in\mu_k}
\left|
\sigma_{K,N}(\{\zeta\})-\frac1k
\right|=0.
\end{align*}
Applying Lemma \ref{lem:finite-fourier} with
\(\nu=\sigma_{K,N}\), for every \(\zeta\in\mu_k\), we have
\begin{align*}
\left|
\sigma_{K,N}(\{\zeta\})-\frac1k
\right|
\leq
\frac1k\sum_{h=1}^{k-1}
\left|\widehat{\sigma}_{K,N}(h)\right|.
\end{align*}
Since \(f\) has order \(k\), we have
\(f^h\not\equiv n^{it}\) for every \(1\leq h\leq k-1\) and
\(t\in\mathbb{R}\). Therefore, Proposition \ref{prop:mode-annihilation} is 
applicable, so that
\begin{align*}
\lim_{K\to\infty}\liminf_{N\to\infty}
\max_{\zeta\in\mu_k}
\left|
\sigma_{K,N}(\{\zeta\})-\frac1k
\right|=0,
\end{align*}
and the proposition follows.
\end{proof}

\end{proposition}


\section*{Acknowledgements}
The author would like to thank Oleksiy Klurman and Sacha Mangerel for their friendly support.

\clearpage

\printbibliography

@article {charamaras2024multiplicativerecurrencelinearpatterns,
    AUTHOR = {Charamaras, D. and Mountakis, A. and Tsinas,
              K.},
     TITLE = {On multiplicative recurrence along linear patterns},
   JOURNAL = {J. Lond. Math. Soc. (2)},
  FJOURNAL = {Journal of the London Mathematical Society. Second Series},
    VOLUME = {112},
      YEAR = {2025},
    NUMBER = {3},
     PAGES = {Paper No. e70292, 53},
  %    ISSN = {0024-6107,1469-7750},
 %  MRCLASS = {11N37 (37A44 37B20)},
%%  MRNUMBER = {4960398},
%       DOI = {10.1112/jlms.70292},
 %      URL = {https://doi.org/10.1112/jlms.70292},
}

@article {MR3856825,
    AUTHOR = {Klurman, O. and Mangerel, A. P.},
     TITLE = {Rigidity theorems for multiplicative functions},
   JOURNAL = {Math. Ann.},
  FJOURNAL = {Mathematische Annalen},
    VOLUME = {372},
      YEAR = {2018},
    NUMBER = {1-2},
     PAGES = {651--697},
   %   ISSN = {0025-5831,1432-1807},
  % MRCLASS = {11N37},
 % MRNUMBER = {3856825},
%MRREVIEWER = {Y.-F.\ S.\ P\'etermann},
     %  DOI = {10.1007/s00208-018-1724-6},
    %   URL = {https://doi.org/10.1007/s00208-018-1724-%6},
}

@article {MR4594405,
    AUTHOR = {Donoso, S. and Le, A. N. and Moreira, J. and Sun,
              W.},
     TITLE = {Additive averages of multiplicative correlation sequences and
              applications},
   JOURNAL = {J. Anal. Math.},
  FJOURNAL = {Journal d'Analyse Math\'ematique},
    VOLUME = {149},
      YEAR = {2023},
    NUMBER = {2},
     PAGES = {719--761},
 %     ISSN = {0021-7670,1565-8538},
%   MRCLASS = {37P30 (11B30 11N37)},
 % MRNUMBER = {4594405},
%       DOI = {10.1007/s11854-022-0264-x},
%       URL = {https://doi.org/10.1007/s11854-022-0264-x},
}

@misc{leung2025multiplicativerecurrencemobiustransformations,
      title={Multiplicative recurrence of M\"obius transformations}, 
      author={S.-K. Leung and C. Táfula},
      year={2025},
      eprint={2409.12936},
      archivePrefix={arXiv},
%      primaryClass={math.NT},
  %    url={https://arxiv.org/abs/2409.12936}, 
}

@article {MR4328688,
    AUTHOR = {Klurman, O. and Mangerel, A. P. and Pohoata,
              C. and Ter\"av\"ainen, J.},
     TITLE = {Multiplicative functions that are close to their mean},
   JOURNAL = {Trans. Amer. Math. Soc.},
  FJOURNAL = {Transactions of the American Mathematical Society},
    VOLUME = {374},
      YEAR = {2021},
    NUMBER = {11},
     PAGES = {7967--7990},
   %   ISSN = {0002-9947,1088-6850},
 %  MRCLASS = {11N37 (11N64)},
%  MRNUMBER = {4328688},
%MRREVIEWER = {Peter\ Shiu},
  %     DOI = {10.1090/tran/8427},
 %      URL = {https://doi.org/10.1090/tran/8427},
}

@article {MR3569059,
    AUTHOR = {Tao, T},
     TITLE = {The logarithmically averaged {C}howla and {E}lliott
              conjectures for two-point correlations},
   JOURNAL = {Forum Math. Pi},
  FJOURNAL = {Forum of Mathematics. Pi},
    VOLUME = {4},
      YEAR = {2016},
     PAGES = {e8, 36},
 %     ISSN = {2050-5086},
%   MRCLASS = {11N37},
%  MRNUMBER = {3569059},
%MRREVIEWER = {Y.-F.\ S.\ P\'etermann},
 %      DOI = {10.1017/fmp.2016.6},
%       URL = {https://doi.org/10.1017/fmp.2016.6},
}

@incollection {MR1218222,
    AUTHOR = {Ruzsa, I. Z.},
     TITLE = {On an inequality of {E}rd\H os and {T}ur\'an concerning
              uniform distribution modulo one. {I}},
 BOOKTITLE = {Sets, graphs and numbers ({B}udapest, 1991)},
    SERIES = {Colloq. Math. Soc. J\'anos Bolyai},
    VOLUME = {60},
     PAGES = {621--630},
 PUBLISHER = {North-Holland, Amsterdam},
      YEAR = {1992},
 %     ISBN = {0-444-98681-2},
 %  MRCLASS = {11K38 (11K06)},
 % MRNUMBER = {1218222},
%MRREVIEWER = {Gerhard\ Larcher},
}

@article {frantzikinakis2023partitionregularitypythagoreanpairs,
    AUTHOR = {Frantzikinakis, N. and Klurman, O. and Moreira, J.},
     TITLE = {Partition regularity of {P}ythagorean pairs},
   JOURNAL = {Forum Math. Pi},
  FJOURNAL = {Forum of Mathematics. Pi},
    VOLUME = {13},
      YEAR = {2025},
     PAGES = {Paper No. e5, 52},
  %    ISSN = {2050-5086},
 %  MRCLASS = {05D10 (11B30 11N37 37A44 37H15)},
 % MRNUMBER = {4862834},
 %      DOI = {10.1017/fmp.2024.27},
 %      URL = {https://doi.org/10.1017/fmp.2024.27},
}

@book{gs,
  title={Multiplicative number theory: The pretentious approach},
  author={Granville, A. and Soundararajan, K.},
url={https://dms.umontreal.ca/~andrew/PDF/Book.To2.5.pdf},
year={2014},
note={Book manuscript in preparation}
}

@article {MR3533300,
    AUTHOR = {Tao, T.},
     TITLE = {The {E}rd\H os discrepancy problem},
   JOURNAL = {Discrete Anal.},
  FJOURNAL = {Discrete Analysis},
      YEAR = {2016},
     PAGES = {Paper No. 1, 29},
  %    ISSN = {2397-3129},
%   MRCLASS = {11K38},
%  MRNUMBER = {3533300},
%MRREVIEWER = {Friedrich\ Pillichshammer},
 %      DOI = {10.19086/da.609},
%       URL = {https://doi.org/10.19086/da.609},
}

@misc{frantzikinakis2024partitionregularitygeneralizedpythagorean,
      title={Partition regularity of generalized Pythagorean pairs}, 
      author={N. Frantzikinakis and O. Klurman and J. Moreira},
      year={2024},
      eprint={2407.08360},
      archivePrefix={arXiv},
%      primaryClass={math.CO},
 %     url={https://arxiv.org/abs/2407.08360}, 
}


\end{document}